\newcommand{\qed}{
  \ifmmode
   \eqno{\qedsymbol}
  \else
    \leavevmode\unskip\penalty9999 \hbox{}\nobreak\hfill\hbox{\qedsymbol}
  \fi
}
\newcommand{\qedsymbol}{\leavevmode\vrule height 1.2ex width 1.1ex depth -.1ex}
\newenvironment{proof}{\begin{trivlist}\item[\hskip
\labelsep{\bf Proof.\quad}]}
{\hfill\qed\rm\end{trivlist}}
\newtheorem{theorem}{Theorem}[section]
\newtheorem{proposition}[theorem]{Proposition}
\newtheorem{lemma}[theorem]{Lemma}
\newtheorem{definition}[theorem]{Definition}
\newtheorem{question}[theorem]{Question}
\mathchardef\emptyset="001F
\def\X{{\mathcal X}}
\def\SF{{\mathcal {SF}}}
\begin{document}

\title{A short note on strongly flat covers of acts over monoids}
\author{Alex Bailey and James Renshaw}
\date{September 2013}
\maketitle
\let\thefootnote\relax\footnote{{\\ 
University of Southampton, Southampton, SO17 1BJ, England\\
Email: alex.bailey@soton.ac.uk\\
j.h.renshaw@maths.soton.ac.uk}}

\begin{abstract}
\noindent Recently two different concepts of covers of acts over monoids have been studied. That based on coessential epimorphisms and that based on Enochs' definition of a flat cover of a module over a ring. Two recent papers have suggested that in the former case, strongly flat covers are not unique. We show that these examples are in fact false and so the question of uniqueness appears to still remain open. In the latter case, we re-present an example due to Kruml that demonstrates that, unlike the case for flat covers of modules, strongly flat covers of $S-$acts do not always exist.
\end{abstract}

\smallskip

{\bf Key Words} Semigroups, monoids, acts, strongly flat, covers.

{\bf 2010 AMS Mathematics Subject Classification} 20M50.

\section{Introduction and Preliminaries}
Let $S$ be a monoid. By a {\em right $S-$act} we mean a non-empty set $X$ together with an action $X\times S\to X$ given by $(x,s)\mapsto xs$ such that for all $x\in X, s,t\in S, x1=x$ and $x(st) = (xs)t$.  We refer the reader to~\cite{howie-95} for basic results and terminology in semigroups and monoids and to \cite{ahsan-08} and \cite{kilp-00} for all undefined terms concerning acts over monoids.

Enochs' conjecture, that all modules over a unitary ring have a flat cover, was finally proven in 2001. In 2008, Mahmoudi and Renshaw \cite{renshaw-08} initiated a study of flat covers of acts over monoids. Their definition of cover proved to be different to that given by Enochs and in 2012, Bailey and Renshaw \cite{bailey-12a} initiated a study of Enochs' definition of cover. We give here both definitions but note that we use a slightly different terminology to that used in~\cite{renshaw-08}.

\begin{definition}
\rm Let $S$ be a monoid, $A$ an $S-$act and let $\X$ be a class of $S-$acts closed under isomorphisms. 
\begin{enumerate}
\item We shall say that an $S-$act $C$ together with an $S-$epimorphism $f : C \to A$ is a {\em coessential-cover} of $A$ if there is no proper subact $B$ of $C$ such that $f|_B$ is onto. If in addition $C\in\X$ then we shall call it an $\X-${\em coessential-cover}.

\item By an $\X$-{\em precover} of $A$ we mean an $S-$map $g: P\to A$ for some $P\in \X$ such that
for every $S-$map $g':P'\to A$, for $P'\in \X$, there exists an $S-$map $f:P'\to P$ with $g'=gf$.
$$
\begin{tikzpicture}[description/.style={fill=white,inner sep=2pt}]
\matrix (m) [matrix of math nodes, row sep=3em,
column sep=2.5em, text height=1.5ex, text depth=0.25ex]
{P & A\\ 
&P'\\};
\path[->,font=\scriptsize]
(m-2-2) edge node[auto,below left] {$f$} (m-1-1)
(m-2-2) edge node[auto, right] {$g'$} (m-1-2)
(m-1-1) edge[->] node[auto,above] {$g$} (m-1-2);
\end{tikzpicture}
$$
If in addition the $\X-$precover satisfies the condition that each $S-$map $f:P\to P$ with $gf=g$ is an isomorphism, then we shall call it an {\em $\X-$cover}.
\end{enumerate}
\end{definition}

It was shown in~\cite[Lemma 2.1]{bailey-12b} that if $\X$ is the class of projective (or free) $S-$acts then these two definitions coincide, whereas if $S$ is the infinite monogenic monoid  and $\X=\SF$ is the class of strongly flat $S-$acts then it easily follows from~\cite[Corollary 3.3]{renshaw-08} that the 1-element $S-$act does not have an $\SF-$coessential-cover but does have an $\SF-$cover by~\cite[Corollary 5.6]{bailey-12a}.

\smallskip

It is also easy to show that $\X-$covers, when they exist, are unique up to isomorphism, whereas this is not true, in general,  for $\X-$coessential-covers. However the question as to the uniqueness of $\SF-$coessential-covers has remained open, as apparently has the question of whether all $S-$acts have an $\SF-$cover.

\section{Uniqueness of strongly flat covers}
Recently Qiao and Wei have published an example of a monoid that they claim demonstrates that some $S-$acts do not have unique $\SF-$coessential-covers~\cite[Example 2.5]{qiao-12a}. However their result is false.

\smallskip

Let
$$
S=\langle x_0,x_1,\ldots \mid k\ge1, x_0x_k=x_kx_0=x_0, x_k^k=x_k^{k+1}; i,j\ge 1, x_ix_j=x_j^2\rangle\cup\{1\}
$$
and define a right $S-$congruence $\rho$ on $S$ by  $(s,t)\in\rho$ if and only if either $(s,t)\in\langle x_0\rangle$ or $s,t\in\langle x_1,x_2,\ldots\rangle\cup\{1\}$. For notational convenience, let us denote $1$ by $x_i^0$ for any $i\ge0$. 

Now let $R_i=\langle x_i\rangle\cup\{1\}$ and define a right congruence $\sigma_i$ on $S$ by $(s,t)\in\sigma_i$ if and only if there exists $p,q\in R_i$ with $ps=qt$. It is clear that if $i\ne j$ then $\sigma_i\ne\sigma_j$. Qiao and Wei claim that $S/\sigma_i$ and $S/\sigma_j$ are distinct $\SF-$coessential-covers for $S/\rho$. However by \cite[Lemma 2.4]{renshaw-08} we see that $S/\sigma_i\cong S/\sigma_j$ if and only if there exists $u\in S$ such that $\sigma_i=\{(s,t)\in S\times S \mid (us,ut)\in\sigma_j\}$. We show now that this is indeed the case.

\smallskip

First let us suppose without loss of generality that $j>i\ge0$. Suppose also that $(s,t)\in\sigma_i$ so that there exists $p,q\in R_i$ with $ps=qt$. Assume without loss of generality that
 $s\ne t$. We consider two cases:
\begin{enumerate}
\item Suppose that $i=0$. Then $\sigma_0=S\times S=\{(s,t)\in S\times S \mid (x_0s,x_0t)\in\sigma_j\}$.
\item Suppose that $i>0$.
\begin{enumerate}
\item If $s=1$. Then $t\in \langle x_i\rangle$ and so $x_j^jx_is=x_j^jx_it$.
\item If $s\in\langle x_k\rangle, k\ge 1$ then $t\in R_k$ (notice that if $t=1$ then $k=i$) and so $x_j^kx_is=x_j^kx_it$. 
\end{enumerate}
In both cases we deduce that $(s,t)\in\{(s,t)\in S\times S \mid (x_is,x_it)\in\sigma_j\}$.

Conversely, if $(x_is,x_it)\in\sigma_j$ then there exists $p',q'\in R_j$ with $p'x_is=q'x_it$. But $p'x_i, q'x_i\in\langle x_i\rangle$ and so $(s,t)\in\sigma_i$.
\end{enumerate}
Consequently $S/\sigma_i\cong S/\sigma_j$ and the $\SF-$coessential-covers are not distinct.

Notice in fact that by~\cite[Proposition 2.8]{ershad-11} $S/\rho$ does indeed have a unique $\SF-$coessential-cover since every element of the form $x_j^j$ is a right zero in $[1]_\rho$.

\medskip

There is a similar mistake to be found in \cite[Example 3.4]{qiao-12} where the same claim is made. Indeed the monoid in this example is finite whereas Ershad and Khosravi in~\cite{ershad-11} give an extensive list of monoids, which include the finite ones, where $\SF-$coessential-covers are unique, when they exist. A much simpler example with the same property would be the monoid $S=R^1$ where $R$ is any right zero semigroup. Given any $z \in R$, define $R_z=\{1,z\}$ and $(s,t) \in \sigma_z$ if and only if there exists $p,q \in R_z$ such that $ps=qt$. Then for every $z_1,z_2 \in R$, $\sigma_{z_1} \neq \sigma_{z_2}$ and $S/\sigma_{z_1}$ and $S/\sigma_{z_2}$ are both $\SF-$coessential-covers of the $1-$element $S-$act. However, $S/\sigma_{z_1}$ and $S/\sigma_{z_2}$ are both isomorphic.  Consequently the question of when $\SF-$coessential-covers are unique would appear still to be an open one.

\section{Existence of $\SF$-covers}

Enochs, Bican and El Bashir finally proved that all modules over a ring have flat covers in 2001. Similar results have subsequently been proved in a number of other categories and the obvious analogue for acts over monoids would be the existence of $\SF-$covers. However it has recently been brought to our attention that Kruml~\cite{kruml-08} has provided an example to show that $\SF-$covers do not always exist. The result below is essentially Kruml's, our contribution being to translate the proof from the language of varieties to the language of $S-$acts. We also pose a new question about the existence of $\SF-$covers.

\begin{proposition}[Cf. {\cite[Proposition 3.1]{kruml-08}}]
Let
$$T=\langle a_0,a_1,a_2 \dots \mid a_ia_j=a_{j+1}a_i \text{ for all }i \le j\rangle
$$
and let $S=T^1$, then the one element $S$-act $\Theta_S$ does not have an $\SF$-precover.
\end{proposition}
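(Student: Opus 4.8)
The plan is to first reduce the statement to a weak-terminality question. Since $\Theta_S$ has a single element, every $S$-map with codomain $\Theta_S$ is the unique constant map, so an $\SF$-precover $g:P\to\Theta_S$ is nothing more than a strongly flat act $P$ into which \emph{every} strongly flat $S$-act admits an $S$-map; that is, $P$ is a weakly terminal object of the category $\SF$. Hence it suffices to prove that $\SF$ has no weakly terminal object. I would first record that $P=\Theta_S$ is not itself available: writing elements of $T$ in the normal form obtained below, one checks that there is no $u\in S$ with $ua_0=ua_1$, which is exactly the failure of Condition (E) for $\Theta_S$ (see \cite{kilp-00} for Conditions (P) and (E)), so $\Theta_S$ is not strongly flat and all strongly flat $P$ must genuinely be excluded.

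Next I would set up the combinatorics of $S$. The rewriting rule $a_ia_j\to a_{j+1}a_i$ for $i\le j$ is length preserving and, I expect, confluent, the irreducible words being the strictly decreasing ones $a_{k_1}\cdots a_{k_n}$ with $k_1>\cdots>k_n$; thus the elements of $T$ biject with the finite subsets of $\mathbb{N}$ (and $1$ with $\emptyset$). Two identities would drive the argument: the shift $\theta(a_i)=a_{i+1}$ is an injective endomorphism of $T$ with $a_0w=\theta(w)a_0$ for all $w$, and $a_0^{\,n}=a_{n-1}\cdots a_1a_0$. From the normal form I would then prove the \emph{rigidity lemma}: for $i\ne j$ there is no $u\in S$ with $ua_i=ua_j$. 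Combined with Condition (E), this gives the crucial fact that in every strongly flat act $A$ and every $p\in A$ one has $pa_i\ne pa_j$ whenever $i\ne j$, so no strongly flat act equalises two distinct generators on any element. I would also invoke the standard description of strongly flat acts as directed colimits of finitely generated free acts, and the closure of $\SF$ under coproducts and directed colimits \cite{kilp-00}.

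With these tools, suppose for contradiction that $P$ is weakly terminal. A naive cardinality argument fails, since a free act of any rank maps onto the cyclic image of a single element of $P$; likewise the finitely generated test objects are too tame, because the only idempotent of $S$ is $1$, so a finitely generated projective act is free and maps into $P$ trivially. The obstruction must therefore be structural and carried by infinitely presented acts, and my plan is to use the shift $\theta$ as the engine that produces genuinely new strongly flat acts at every stage. Starting from $H_0=\varinjlim(S\xrightarrow{a_0\cdot}S\xrightarrow{a_0\cdot}\cdots)$, which is strongly flat and carries an infinitely $a_0$-divisible element, I would build an increasing tower $H_0\to H_1\to\cdots$ of strongly flat acts, each successor gluing in (through the relation $a_0a_0=a_1a_0$ and the twist $a_0w=\theta(w)a_0$) a further $a_0$-predecessor branch, taking directed colimits at limit stages so that strong flatness is preserved. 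Weak terminality supplies maps $H_\alpha\to P$, and the aim is to track the distinguished elements and apply Condition (P) to the relations $a_0a_0=a_1a_0$ and their shifts until two branches are forced to collapse, producing an equality $pa_i=pa_j$ with $i\ne j$ in $P$ and contradicting the rigidity lemma.

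The main obstacle is exactly this last step, and it is where I expect the real work, and the real risk, to lie. On the one hand one must verify that every $H_\alpha$, and each limit, is genuinely strongly flat, i.e. that Conditions (P) and (E) survive the gluings and colimits. On the other hand one must show that no compatible family of maps $H_\alpha\to P$ can persist for all $\alpha$, which amounts to showing that the preorder ``admits a map into'' on $\SF$ has no cofinal set. The leverage I would try to exploit is that every element of $P$ arises from a finitely generated free act in the defining colimit and so should carry only boundedly many predecessor branches of a given shape, whereas the tower manufactures unboundedly many; forcing the resulting collapse into a rigid identity $pa_i=pa_j$ is the crux, and the step I am least certain survives without a more delicate argument. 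It would also be worth comparing with the criteria of \cite{renshaw-08} and \cite{bailey-12a}, since the contrast with the infinite monogenic monoid, whose one-element act does have an $\SF$-cover, must come precisely from the extra generators and the shift relation exploited here.
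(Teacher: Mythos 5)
Your reduction is sound and matches the paper's first move: since $\Theta_S$ is a single point, an $\SF$-precover of $\Theta_S$ is exactly a weakly terminal object of $\SF$ (the paper cites \cite[Lemma 4.7]{bailey-12a} for this), and your normal-form analysis and rigidity lemma (no $u\in S$ with $ua_i=ua_j$ for $i\ne j$, i.e.\ left cancellativity of $S$) are correct and are precisely the preliminary facts the paper establishes. But the heart of the proof --- actually deriving a contradiction from the existence of a weakly terminal $P$ --- is missing. Your proposed tower $H_0\to H_1\to\cdots$ with ``gluing in further $a_0$-predecessor branches'' is never constructed, the preservation of strong flatness through the gluings is not checked, and the final collapse step is, as you yourself concede, the part you cannot justify. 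Showing that the preorder ``admits a map into'' has no weakly terminal object cannot be done merely by exhibiting an unbounded chain of strongly flat acts (a single $P$ could still receive maps from all of them); you need a mechanism that converts the existence of a map $F\to P$ for a suitably large $F$ into an equation $ua_i=ua_j$ inside $S$ itself, and that mechanism is absent.

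The paper closes exactly this gap with a refined cardinality argument --- note that your assertion that ``a naive cardinality argument fails'' is where you turn away from the successful route. The point is not to count elements of $P$ but to count the data in its Stenstr\"om presentation: by \cite[Theorem 5.3]{stenstrom-71} write the weakly terminal object as a directed colimit $(T,\alpha_i)_{i\in I}$ of finitely generated free acts $(T_i,\phi_{i,j})$. Take a totally ordered set $X$ with $|X|>\max\{|I|,\aleph_0,|S|\}$ and form the direct system over the finite subsets of $X$ in which every object is $S$ and the inclusion of an $(n-1)$-set $Y$ into $Y\cup\{z\}$ is left multiplication $\lambda_{a_i}$ with $i=|\{y\in Y\mid y<z\}|$; the defining relations $a_ia_j=a_{j+1}a_i$ are exactly what make this a direct system, and its colimit $F$ is strongly flat by \cite[Proposition 5.2]{stenstrom-71}. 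A map $t:F\to T$ assigns to each singleton $\{x\}$ a pair $(i,x_i)$ with $x_i\in T_i$, and since there are at most $\max\{|I|,\aleph_0,|S|\}$ such pairs, two singletons $\{x\},\{y\}$ with $x<y$ receive the same pair. Because $\beta_{\{x\}}=\beta_{\{x,y\}}\lambda_{a_1}$ and $\beta_{\{y\}}=\beta_{\{x,y\}}\lambda_{a_0}$, the interpolation property of directed colimits (\cite[Theorem 2.2]{bailey-12a}) pushes everything into a single finitely generated free $T_m$ where $\phi_{j,m}\theta_j\lambda_{a_1}=\phi_{j,m}\theta_j\lambda_{a_0}$; since $\phi_{j,m}\theta_j$ is an endomorphism of $S$ and hence injective by your own rigidity lemma, this forces $a_0=a_1$, a contradiction. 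I would encourage you to keep your reduction and rigidity lemma and replace the tower construction with this single large colimit plus pigeonhole argument.
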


\begin{proof}
We first note that $S$ is left cancellative. In fact, every word $w \in T$ has a unique normal form $w=a_{\alpha(1)}\cdots a_{\alpha(n)}$ where $\alpha(i) \le \alpha(i+1)$ for all $1 \le i \le n-1$, and given any $a_{\alpha(n+1)},a_{\beta(n+1)}$, it is easy to see that $wa_{\alpha(n+1)}=wa_{\beta(n+1)}$ implies $\alpha(n+1)=\beta(n+1)$. Hence every $S-$endomorphism $h:S \to S$ is injective, as $h(s)=h(t)$ implies $h(1)s=h(1)t$.

\smallskip

Assume $\Theta_S$ does have an $\SF$-precover, then by \cite[Lemma 4.7]{bailey-12a}, $\SF$ contains a weakly terminal object, say $T$. By \cite[Theorem 5.3]{stenstrom-71}, let $(T,\alpha_i)_{i \in I}$ be the directed colimit of finitely generated free $S$-acts $(T_i,\phi_{i,j})_{i \in I}$. Let $X$ be any totally ordered set with $|X|>\max\{|I|,\aleph_0,|S|\}$ and let Fin$(X)$ denote the set of all finite subsets of $X$. We now define a direct system indexed over Fin$(X)$ partially ordered by inclusion, where every object $S_Y$ is isomorphic to $S$ and a map from an $n-1$ element subset $Y$ into an $n$ element subset $Y \cup \{z\}$ is defined to be the endomorphism $\lambda_{a_i}:S \to S$, $s \mapsto a_is$, where $i=|\{y \in Y \mid y<z\}|$. It follows from the presentation of $S$ that this is indeed a direct system.
Let $(F,\beta_Y)_{Y \in \text{Fin}(X)}$ be the directed colimit of this direct system, which by \cite[Proposition 5.2]{stenstrom-71}, is a strongly flat act. Therefore, there exists an $S$-map $t:F \to T$. Now for each singleton $\{x\} \in $ Fin$(X)$, $t\beta_{\{x\}}(1) \in T$ and so there exists some $i \in I$ and $x_i \in T_i$ such that $\alpha_i(x_i)=t\beta_{\{x\}}(1)$. Define the $S$-map $\theta_i:S_{\{x\}} \to T_i$, $s \mapsto x_is$ and then $t\beta_{\{x\}}=\alpha_i\theta_i$. So by the axiom of choice we can define a function $h:X \to Z, x \mapsto \left(i,x_i\right)$ where $Z:=\{(i,x) \in \{i\} \times T_i \mid i \in I\}$ and $|Z| \le \max\{|I|,\aleph_0,|S|\}$. Since $|X| > |Z|$, $h$ cannot be an injective function and so there exist $x \neq y \in X$ with $h(x)=h(y)$. Since $\theta_i$ is determined entirely by the image of $1$, we have that $t\beta_{\{x\}}=\alpha_i\theta_i=t\beta_{\{y\}}$. Without loss of generality, assume $x < y$ in $X$, then $\beta_{\{x,y\}}\lambda_{a_1}=\beta_{\{x\}}$ and $\beta_{\{x,y\}}\lambda_{a_0}=\beta_{\{y\}}$. Similarly there also exists $j \in I$, $\theta_j \in $ Hom$(S_{\{x,y\}},T_j)$ such that $t\beta_{\{x,y\}}=\alpha_j \theta_j$. Therefore we have
\begin{align*}
&\alpha_i \theta_i=t\beta_{\{x\}}=t\beta_{\{x,y\}}\lambda_{a_1}=\alpha_j\theta_j\lambda_{a_1} \\
\Rightarrow \quad &\alpha_i\left(\theta_i(1)\right)=\alpha_j\left(\theta_j\lambda_{a_1}(1)\right)
\end{align*}
and so by \cite[Theorem 2.2]{bailey-12a},there exists some $k\ge i, j$ such that $\phi_{i,k}\left(\theta_i(1)\right)=\phi_{j,k}\left(\theta_j\lambda_{a_1}(1)\right)$ which implies $\phi_{i,k}\theta_i=\phi_{j,k}\theta_j\lambda_{a_1}$. Similarly
\begin{align*}
&\alpha_i \theta_i=t\beta_{\{y\}}=t\beta_{\{x,y\}}\lambda_{a_0}=\alpha_j\theta_j\lambda_{a_0}=\alpha_k\phi^j_k\theta_j\lambda_{a_0} \\
\Rightarrow \quad &\alpha_i\left(\theta_i(1)\right)=\alpha_k\left(\phi^j_k\theta_j\lambda_{a_0}(1)\right)
\end{align*}
which again, implies there exists some $m \ge i,k$ such that $\phi_{i,m}\theta_i=\phi_{k,m}\phi_{j,k}\theta_j\lambda_{a_0}=\phi_{j,m}\theta_j\lambda_{a_0}$. Therefore
\[
\phi_{j,m}\theta_j\lambda_{a_1}=\phi_{k,m}\phi_{j,k}\theta_j\lambda_{a_1}=\phi_{k,m}\phi_{i,k}\theta_i=\phi_{i,m}\theta_i=\phi_{j,m}\theta_j\lambda_{a_0}.
\]
Since both $T_j$ and $T_m$ are finitely generated free $S$-acts, and $S_{\{x,y\}}$ is a cyclic $S$-act, it is clear that $\phi_{j,m}\theta_j$ is an endomorphism of $S$ and so a monomorphism. Therefore $\lambda_{a_0}=\lambda_{a_1}$ which implies $a_0=a_1$ which is a contradiction.
\end{proof}

It is still an open question to find necessary and sufficient conditions for the existence of $\SF-$covers for the category of acts over monoids.

\bigskip

Recall that a ring/monoid is called {\em right perfect} if every right module/act over it has a projective cover. Bass proved in 1960 that a ring is right perfect if and only if it satisfies $M_L$, the descending chain condition on principal left ideals \cite{bass-60}. It was shown in \cite{isbell-71} and \cite{fountain-76} that the case for monoids is different. A monoid is right perfect if and only if it satisfies $M_L$ and Condition $(A)$, every right $S-$act has the ascending chain condition on cyclic subacts. It was shown in \cite[Proposition 5.7]{bailey-12a} that a monoid $S$ satisfying Condition $(A)$ is a sufficient condition for every $S-$act to have an $\SF-$cover. The converse however is not true, the infinite monogenic monoid being a counterexample. It was also shown in \cite[Corollary 5.6]{bailey-12a} that $S$ being right cancellative is sufficient for every $S-$act to have an $\SF$-cover, and as we can see from the next Lemma, right cancellativity and $M_L$ implies Condition $(A)$.

\begin{lemma}
A right cancellative monoid with $M_L$ is a group.
\end{lemma}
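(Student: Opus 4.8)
The plan is to prove that every element of $S$ has a left inverse and then to invoke the classical fact that a monoid in which every element is left-invertible is a group. The two hypotheses play complementary roles: $M_L$ produces a relation of the form $a^n = s a^{n+1}$, and right cancellativity converts this into the genuine equation $sa = 1$.

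First I would fix $a \in S$ and consider the descending chain of principal left ideals $Sa \supseteq Sa^2 \supseteq Sa^3 \supseteq \cdots$, each containment holding because $s a^{n+1} = (sa)a^n$ shows $Sa^{n+1} \subseteq Sa^n$. By $M_L$ this chain stabilises, so there is some $n \ge 1$ with $Sa^n = Sa^{n+1}$. In particular $a^n \in Sa^{n+1}$, so $a^n = s a^{n+1}$ for some $s \in S$. Writing the left-hand side as $1 \cdot a^n$ and the right-hand side as $(sa)a^n$, right cancellativity allows me to cancel the common right factor $a^n$ and conclude that $sa = 1$. Thus $s$ is a left inverse of $a$, and since $a$ was arbitrary, every element of $S$ is left-invertible.

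Finally I would run the standard semigroup argument to upgrade left inverses to two-sided inverses. Given $a$ with left inverse $s$ (so $sa = 1$), the element $s$ in turn has a left inverse $s'$ (so $s's = 1$); then $s' = s'(sa) = (s's)a = a$, whence $as = s's = 1$. Hence $s$ is a two-sided inverse of $a$, and as $a$ was arbitrary, $S$ is a group. I do not expect a serious obstacle: the only points requiring care are checking that the chain of left ideals is genuinely descending and applying right cancellativity in the correct direction, cancelling $a^n$ on the right rather than the left, since it is precisely this cancellation that extracts a left inverse from the ideal-theoretic equality supplied by $M_L$.
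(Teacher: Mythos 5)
Your proof is correct and follows essentially the same route as the paper: use $M_L$ to stabilise the chain $Sa \supseteq Sa^2 \supseteq \cdots$, then right-cancel $a^n$ to extract a left inverse. The paper phrases the cancellation step as yielding $S = Ss$ and leaves the final "left-invertible monoid is a group" step implicit, which you have simply spelled out.
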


\begin{proof}
Given any $s \in S$, consider the chain $Ss \supseteq Ss^2 \supseteq Ss^3 \supseteq \dots$, by $M_L$ there exists some $n \in \mathbb{N}$ such that $Ss^n=Ss^{n+1}$ and by right cancellativity this implies $S=Ss$.
\end{proof}

\smallskip

Since the only known counterexample to the existence of $\SF-$covers, as given above, does not have $M_L$, it seem natural to pose the following question

\begin{question}
Is it true that a monoid $S$ is right perfect if and only if it satisfies $M_L$ and every right $S-$act has an $\SF$-cover?
\end{question}

This clearly generalises the situation for rings where modules always have flat covers and a ring is perfect if and only if it has $M_L$. Clearly one way is obvious as a perfect monoid has Condition $(A)$ and so every $S-$act has an $\SF-$cover. The converse however is not clear. We would like to know if there exists a monoid $S$ satisfying $M_L$ but not Condition $(A)$ for which every $S-$act has an $\SF-$cover. Another class of monoids known to have $\SF-$covers are those monoids having weak finite geometric type (see \cite[Proposition 5.4]{bailey-12a}). This would seem to be a good place to look for a counterexample, although the main example of a monoid having weak finite geometric type, that is not right cancellative, is the Bicyclic monoid which does not have $M_L$.

\end{document}